\begin{document}

\newtheorem{theorem}{Theorem}
\newtheorem{lemma}[theorem]{Lemma}
\newtheorem{corollary}[theorem]{Corollary}
\newtheorem{proposition}[theorem]{Proposition}

\theoremstyle{definition}
\newtheorem*{definition}{Definition}
\newtheorem*{remark}{Remark}
\newtheorem*{example}{Example}


\def\cA{\mathcal A}
\def\cB{\mathcal B}
\def\cC{\mathcal C}
\def\cD{\mathcal D}
\def\cE{\mathcal E}
\def\cF{\mathcal F}
\def\cG{\mathcal G}
\def\cH{\mathcal H}
\def\cI{\mathcal I}
\def\cJ{\mathcal J}
\def\cK{\mathcal K}
\def\cL{\mathcal L}
\def\cM{\mathcal M}
\def\cN{\mathcal N}
\def\cO{\mathcal O}
\def\cP{\mathcal P}
\def\cQ{\mathcal Q}
\def\cR{\mathcal R}
\def\cS{\mathcal S}
\def\cU{\mathcal U}
\def\cT{\mathcal T}
\def\cV{\mathcal V}
\def\cW{\mathcal W}
\def\cX{\mathcal X}
\def\cY{\mathcal Y}
\def\cZ{\mathcal Z}


\def\sA{\mathscr A}
\def\sB{\mathscr B}
\def\sC{\mathscr C}
\def\sD{\mathscr D}
\def\sE{\mathscr E}
\def\sF{\mathscr F}
\def\sG{\mathscr G}
\def\sH{\mathscr H}
\def\sI{\mathscr I}
\def\sJ{\mathscr J}
\def\sK{\mathscr K}
\def\sL{\mathscr L}
\def\sM{\mathscr M}
\def\sN{\mathscr N}
\def\sO{\mathscr O}
\def\sP{\mathscr P}
\def\sQ{\mathscr Q}
\def\sR{\mathscr R}
\def\sS{\mathscr S}
\def\sU{\mathscr U}
\def\sT{\mathscr T}
\def\sV{\mathscr V}
\def\sW{\mathscr W}
\def\sX{\mathscr X}
\def\sY{\mathscr Y}
\def\sZ{\mathscr Z}


\def\fA{\mathfrak A}
\def\fB{\mathfrak B}
\def\fC{\mathfrak C}
\def\fD{\mathfrak D}
\def\fE{\mathfrak E}
\def\fF{\mathfrak F}
\def\fG{\mathfrak G}
\def\fH{\mathfrak H}
\def\fI{\mathfrak I}
\def\fJ{\mathfrak J}
\def\fK{\mathfrak K}
\def\fL{\mathfrak L}
\def\fM{\mathfrak M}
\def\fN{\mathfrak N}
\def\fO{\mathfrak O}
\def\fP{\mathfrak P}
\def\fQ{\mathfrak Q}
\def\fR{\mathfrak R}
\def\fS{\mathfrak S}
\def\fU{\mathfrak U}
\def\fT{\mathfrak T}
\def\fV{\mathfrak V}
\def\fW{\mathfrak W}
\def\fX{\mathfrak X}
\def\fY{\mathfrak Y}
\def\fZ{\mathfrak Z}


\def\C{{\mathbb C}}
\def\F{{\mathbb F}}
\def\K{{\mathbb K}}
\def\L{{\mathbb L}}
\def\N{{\mathbb N}}
\def\Q{{\mathbb Q}}
\def\R{{\mathbb R}}
\def\Z{{\mathbb Z}}
\def\E{{\mathbb E}}
\def\T{{\mathbb T}}
\def\P{{\mathbb P}}
\def\D{{\mathbb D}}


\def\eps{\varepsilon}
\def\mand{\qquad\mbox{and}\qquad}
\def\\{\cr}
\def\({\left(}
\def\){\right)}
\def\[{\left[}
\def\]{\right]}
\def\<{\langle}
\def\>{\rangle}
\def\fl#1{\left\lfloor#1\right\rfloor}
\def\rf#1{\left\lceil#1\right\rceil}
\def\le{\leqslant}
\def\ge{\geqslant}
\def\ds{\displaystyle}

\def\xxx{\vskip5pt\hrule\vskip5pt}
\def\yyy{\vskip5pt\hrule\vskip2pt\hrule\vskip5pt}
\def\imhere{ \xxx\centerline{\sc I'm here}\xxx }

\newcommand{\comm}[1]{\marginpar{
\vskip-\baselineskip \raggedright\footnotesize
\itshape\hrule\smallskip#1\par\smallskip\hrule}}


\def\e{\mathbf{e}}
\def\sPrc{{\displaystyle \sP_r^{(c)}}}

\title{\bf  Zeros of random orthogonal polynomials with complex Gaussian coefficients
}

\author{
{\sc Aaron M.~Yeager} \\
{Department of Mathematics, Oklahoma State University} \\
{Stillwater, OK 74078 USA} \\
{\tt aaron.yeager@okstate.edu}}

\maketitle

\begin{abstract}
Let $\{f_j\}_{j=0}^n$ be a sequence of orthonormal polynomials where the orthogonality relation is satisfied on either the real line or on the unit circle.
We study zero distribution of random linear combinations of the form
$$P_n(z)=\sum_{j=0}^n\eta_jf_j(z),$$
where $\eta_0,\dots,\eta_n$ are complex-valued i.i.d.~standard Gaussian random variables. Using the Christoffel-Darboux formula, the density function for the expected number of zeros of $P_n$ in these cases takes a very simple shape.  From these expressions, under the mere assumption that the orthogonal polynomials are from the Nevai class, we give the limiting value of the density function away from their respective sets where the orthogonality holds. In the case when $\{f_j\}$ are orthogonal polynomials on the unit circle, the density function shows that the expected number of zeros of $P_n$ are clustering near the unit circle.  To quantify this phenomenon,  we give a result that estimates the expected number of complex zeros of $P_n$ in shrinking neighborhoods of compact subsets of the unit circle.
\end{abstract}

\textbf{Keywords:} Random Polynomials, Orthogonal Polynomials, Christoffel-Darboux Formula, Nevai Class.

\textbf{2010 Mathematics Subject Classification :} 30C15, 30B20, 26C10, 60B99.

\section{Introduction}

The study of the expected number of real zeros of polynomials $P_n(z)=\sum_{j=0}^n\eta_jz^j$ with random coefficients, called \emph{random algebraic polynomials}, dates back to the 1930's.   In 1932, Bloch and P\'{o}lya \cite{BP} showed that when $\{\eta_j\}$ are independent and identically distributed (i.i.d.)~random variables that take values from the set $\{-1,0,1\}$ with equal probabilities, the expected number of real zeros is $O(\sqrt{n})$.  Other early advancements in the subject were later made by Littlewood and Offord \cite{LO}, Kac \cite{K1}, \cite{K2}, Rice \cite{R}, Erd\H{o}s and Offord \cite{EO}, and many others.  For a nice history of the early progress in this topic, we refer the reader to the books by Bharucha-Reid and Sambandham \cite{BRS} and by Farahmand \cite{FB}.

It is common to refer to the density function for the expected number of zeros of a random polynomial as the \emph{intensity function} or the \emph{first correlation function}.  In the 1943, Kac \cite{K1} gave a formula for the intensity function of the expected number of real zeros of $P_n(z)$ when $\{\eta_j\}$ are real-valued i.i.d.~normal Gaussian coefficients.  Using that formula he was able to show that the expected number of real roots of the random algebraic polynomial is asymptotic to $2\pi^{-1}\log n$ as $n\rightarrow \infty$.  The error term in his asymptotic was further sharpened by Hammersley \cite{HM}, Wang \cite{WG}, Edelman and Kostlan \cite{EK}, and Wilkins \cite{WL}.

Remaining with case when $\{\eta_j\}$ are real-valued i.i.d.~normal Gaussian random variables, Shepp and Vanderbei gave a  formula for the intensity function for the expected number of complex zeros of the random algebraic polynomial $P_n$ in 1995.  They were also able to obtain a limit of the intensity function as $n\rightarrow \infty$.  Generalizations to other types of real-valued random variables and to other random polynomials with basis functions different than the monomials were made by Ibragimov and Zeitouni \cite{IZ}, Feildheim \cite{FL}, and Vanderbei \cite{CZRS}.

In 1996,  Farahmand \cite{F} produced a formula for the intensity function for a random algebraic polynomial when the random coefficients are complex-valued i.i.d.~standard Gaussian random variables. As an application, Farahmand  considered the spanning functions of the random polynomial to be cosine functions.  For extensions Faramand's result we refer the reader to the works  Farahmand \cite{F1}, Farahmand and Grigorash \cite{KJ} and Farahmand and Jahangiri \cite{FG}.

We will be studying a case of
the expectation of the number zeros of random polynomials of the form
\begin{equation}\label{P}
P_n(z)=\sum_{j=0}^n \eta_j f_j(z), \ \ \ \ z\in\C,
\end{equation}
where $n$ is a fixed integer, $\{f_j\}_{j=0}^n$ are entire functions real-valued on the real line,  $\eta_j=\alpha_j + i \beta_j$, $j=0,1, \dots, n$, with $\{\alpha_j\}_{j=0}^n$ and $\{\beta_j\}_{j=0}^n$ being sequences of i.i.d.~standard Gaussian random variables. The formula for the intensity function associated to $P_n$ is expressed in terms of the kernels
\begin{equation}\label{K01}
K_{n}(z,w)=\sum_{j=0}^{n}f_j(z)\overline{f_j(w)},\ \ \ \ \ \ \ K_{n}^{(0,1)}(z,w)=\sum_{j=0}^{n} f_j(z)\overline{f_j^{\prime}(w)},
\end{equation}
and
\begin{equation}\label{K2}
K_{n}^{(1,1)}(z,w)=\sum_{j=0}^{n}f_j^{\prime}(z)\overline{f_j^{\prime}(w)}.
\end{equation}
We note that since the functions $f_j(z)$ are entire functions that are real-valued on the real line, by the Schwarz Reflection Principle we have $\overline{f_j(z)}=f_j(\bar{z})$ for all $j=0,1,\dots,n$, and all $z\in \C$.

Let $N_n(\Omega)$ denote the (random) number of zeros of $P_n(z)$ as defined by \eqref{P} in a Jordan region $\Omega$ of the complex plane. Due to Edelman and Kostlan \cite{EK} (with different proofs later given by Hough, Krishnapur, Peres, and Vir$\acute{\text{a}}$g in \cite{ZGAF}, Feldheim \cite{FL}, the  author \cite{AY}, and Ledoan \cite{AL}) it is known that for each Jordan region $\Omega \subset \{z\in \C : K_{n}(z,z)\neq 0\}$, we have that the intensity function $\rho_n$ associated to $P_n$ satisfies
$$\E[N_n(\Omega)]=\int_{\Omega}\rho_n (x,y)\ dx \ dy ,$$
with
\begin{equation}
\label{thm2.1}
\rho_n(x,y)=\rho_n(z)=\frac{K_{n}^{(1,1)}(z,z)K_{n}(z,z)-\left|K_{n}^{(0,1)}(z,z)\right|^2}{\pi \left(K_{n}(z,z)\right)^2},
\end{equation}
where the kernels $K_n(z,z)$, $K_n^{(0,1)}(z,z)$, and $K_n^{(1,1)}(z,z)$, are defined in \eqref{K01} and \eqref{K2}. We note that since all the functions that make up $\rho_n$ are real valued, the function $\rho_n$ is real valued.  The function $\rho_n$ is also in fact nonnegative. Furthermore, for $(a,b)\subset \R$, it is also known the $\E[N_n(a,b)]=0$, so that $\rho_n$ does not have mass on the real line.

In the following results, we will be considering the case when the spanning functions $\{f_j\}$ of \eqref{P} are polynomials either orthogonal on the real line (OPRL), or polynomials orthogonal on the unit circle (OPUC).  We say that a collection of polynomials $\{p_j\}_{j\geq 0}$ are orthogonal on the real line with respect to $\mu$, with $\text{supp}\ \mu \subseteq \R$, if
$$\int p_n(x)p_m(x)d\mu(x)=\delta_{nm}, \ \ \ \ \text{for all $n,m\in \N \cup\{0\}$}.$$
We note that when polynomials are orthogonal on the real line, they have real coefficients, and thus are real-valued on the real line.

As mentioned, our other choice of basis of the random sum $P_n$ will the from OPUC.  These are orthogonal polynomials $\{\varphi_j\}_{j\geq 0}$ defined by a probability Borel measure $\mu$ on $\T$ such that
$$\int_{\T} \varphi_n(e^{i\theta})\overline{\varphi_m(e^{i\theta})}\ d\mu(e^{i\theta}) =\delta_{nm}, \ \ \ \ \text{for all $n,m\in \N \cup\{0\}$}.$$
When we restrict $\mu$ to be symmetric with respect to conjugation, the sequence $\{\varphi_j\}$ of OPUC will have real coefficients and consequently be real-valued on the real line.

For analogues of our results concerning random linear combinations of OPRL or OPUC with the random coefficients $\{\eta_j\}$ of $P_n$ being real-valued standard i.i.d.~Gaussian, we refer the reader to works of Das \cite{D} , Das and Bhatt \cite{DB} , Lubinsky, Pritsker, and Xie \cite{LPX} ,\cite{LPX2} (Theorems 2.2 and 2.3), and Yattselev and the author \cite{YY}.

We note there has also been work done in the higher dimensional analogs of the settings mentioned (c.f. Shiffman and Zelditch \cite{SHZ1}-\cite{SHZ3}, Bloom \cite{BL1} and \cite{BL2}, Bloom and Shiffman \cite{BLSH}, Bloom and Levenberg \cite{BLL}, and Bayraktar \cite{BY}).

Using the Christoffel-Darboux formula we show that the intensity function from \eqref{thm2.1} greatly simplifies when the spanning functions are orthogonal polynomials.

\begin{theorem}\label{OP}
Let $P_n(z)=\sum_{j=0}^n \eta_j f_j(z)$, where $\{\eta_j\}_{j=0}^n$ are complex-valued i.i.d.~standard Gaussian random variables, and $\{f_j\}_{j=0}^n$ are orthogonal polynomials.  Let $\rho_n$ be defined as in  \eqref{thm2.1}.
\begin{enumerate}
\item
When $f_j=p_j$, $j=0,\dots, n$, where the $p_j$'s are OPRL, the intensity function $\rho_n$ simplifies as
\begin{align}\label{hnOPRL}
\rho_n(z)
&=\frac{1-h_n(z)^2}{4\pi\left(\emph{\text{Im}}(z)\right)^2},\quad h_n(z)=\frac{\textup{Im}(z)|a_n^{\prime}(z)|}{\textup{Im}(a_n(z))},\quad a_n(z)=\frac{p_{n+1}(z)}{p_n(z)},
\end{align}
for $z\in \C$.
\item Let $f_j=\varphi_j$, $j=0,\dots, n$, where the $\varphi_j$'s are OPUC associated to conjugate-symmetric measure $\mu$.  When $|z|\neq 1$, the intensity function $\rho_n$ reduces to
\begin{align}\label{hnOPUC}
\rho_n(z)
&=\frac{1-|k_{n}(z)|^2}{\pi (1-|z|^2)^2},\quad k_{n}(z)=\frac{(1-|z|^2)b_{n}^{\prime}(z)}{1-|b_{n}(z)|^2},\quad b_{n}(z)=\frac{\varphi_{n+1}(z)}{\varphi_{n+1}^*(z)}.
\end{align}
where $\varphi_n^{*}(z)=z^n \overline{\varphi_n\left(\frac{1}{\bar{z}}\right)}$.
\end{enumerate}
\end{theorem}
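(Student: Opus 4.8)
The plan is to start from the general intensity formula \eqref{thm2.1} and simplify the three kernels $K_n(z,z)$, $K_n^{(0,1)}(z,z)$, $K_n^{(1,1)}(z,z)$ using the Christoffel--Darboux (CD) formula in each of the two settings. For OPRL with recurrence coefficients $a_n,b_n$, the CD kernel is
$$K_n(z,w)=\sum_{j=0}^n p_j(z)\overline{p_j(w)}=a_{n+1}\,\frac{p_{n+1}(z)\overline{p_n(w)}-p_n(z)\overline{p_{n+1}(w)}}{z-\bar w},$$
and for OPUC one has the Szeg\H{o}--CD formula
$$K_n(z,w)=\sum_{j=0}^n \varphi_j(z)\overline{\varphi_j(w)}=\frac{\varphi_{n+1}^*(z)\overline{\varphi_{n+1}^*(w)}-\varphi_{n+1}(z)\overline{\varphi_{n+1}(w)}}{1-z\bar w}.$$
First I would record these, together with the observation (from the Schwarz reflection remark in the excerpt) that $\overline{p_j(w)}=p_j(\bar w)$, so that on the diagonal $w=z$ one gets $K_n(z,z)=a_{n+1}\dfrac{p_{n+1}(z)p_n(\bar z)-p_n(z)p_{n+1}(\bar z)}{z-\bar z}=a_{n+1}\dfrac{2i\,\mathrm{Im}\!\big(p_{n+1}(z)p_n(\bar z)\big)}{2i\,\mathrm{Im}(z)}$, and similarly for OPUC. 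The point is that each diagonal kernel becomes a single closed-form expression rather than a sum.

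Next I would obtain $K_n^{(0,1)}$ and $K_n^{(1,1)}$ by differentiating the CD formula in $w$ (resp.\ in $z$ and then $w$) and evaluating on the diagonal; this is a limiting/L'H\^opital computation near $w=z$ since the CD formula has a removable singularity along the diagonal. Expanding the numerator and denominator of CD to second order in $w-z$ (or $w-\bar z$ after conjugation) produces $K_n^{(0,1)}(z,z)$ and $K_n^{(1,1)}(z,z)$ in terms of $p_n,p_{n+1}$ and their first two derivatives at $z$ and $\bar z$. Then I would substitute everything into \eqref{thm2.1}. The miracle to aim for is that, after writing $a_n(z)=p_{n+1}(z)/p_n(z)$ (OPRL) or $b_n(z)=\varphi_{n+1}(z)/\varphi_{n+1}^*(z)$ (OPUC), the combination $K_n^{(1,1)}K_n - |K_n^{(0,1)}|^2$ collapses: the constants $a_{n+1}$ (and $|a_{n+1}|^2$) cancel between numerator and $(K_n)^2$ in the denominator, the individual values of $p_n$ and $p_{n+1}$ organize themselves into the logarithmic-derivative quantity $a_n'/a_n$, and $K_n(z,z)$ itself turns into $|p_n(z)|^2\,\mathrm{Im}(a_n(z))/\mathrm{Im}(z)$ (up to the constant $a_{n+1}$), so that the $|p_n(z)|^2$ factors also cancel. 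Carrying this through should leave exactly $\rho_n(z)=\dfrac{1-h_n(z)^2}{4\pi(\mathrm{Im}\,z)^2}$ with $h_n$ as stated, and the parallel computation with the Szeg\H{o} kernel, using $1-z\bar w$ in place of $z-\bar w$ and $\mathrm{Im}(z)\to \tfrac12(1-|z|^2)$-type substitutions, should give the OPUC formula with $k_n$ and $b_n$ as stated. One should also check that $a_n(z)\ne 0,\infty$ and $\mathrm{Im}(a_n(z))\ne 0$ for $\mathrm{Im}(z)\ne 0$ (respectively $|b_n(z)|\ne 1$ for $|z|\ne 1$), which follows because $K_n(z,z)>0$ off the real line / off the unit circle, so all the divisions are legitimate.

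The main obstacle I anticipate is bookkeeping in the second-order diagonal expansion of the CD kernel: one must differentiate a quotient whose numerator and denominator both vanish to first order at $w=z$, so getting $K_n^{(1,1)}(z,z)$ correctly requires a careful Taylor expansion (effectively a second-order L'H\^opital), and then one has to recognize the resulting messy expression in $p_n,p_n',p_n'',p_{n+1},p_{n+1}',p_{n+1}''$ as a perfect square-type combination that reassembles into $|a_n'(z)|^2$ and $\mathrm{Im}(a_n(z))^2$. A clean way to manage this is to differentiate the logarithmic form: writing $p_{n+1}(z)=a_n(z)p_n(z)$ and pulling $p_n$ out everywhere turns the kernel manipulations into manipulations of $a_n$ and $\log p_n$, after which the identity $\mathrm{Im}(a_n(z))\,a_{n+1}=K_n(z,z)\,\mathrm{Im}(z)/|p_n(z)|^2$ does most of the work; the OPUC case is handled the same way with $b_n$ and $\varphi_{n+1}^*$ in place of $a_n$ and $p_n$. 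Everything else is routine algebra.
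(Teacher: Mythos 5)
Your proposal follows essentially the same route as the paper: apply the Christoffel--Darboux formula (and its derivatives) to obtain closed forms for the diagonal kernels, substitute into \eqref{thm2.1}, and recognize the ratios $a_n=p_{n+1}/p_n$ and $b_n=\varphi_{n+1}/\varphi_{n+1}^*$; the collapse of $K_n^{(1,1)}K_n-|K_n^{(0,1)}|^2$ that you anticipate is exactly what the paper carries out, including your observation that $K_n(z,z)$ reduces (up to the leading-coefficient constant) to $|p_n(z)|^2\,\textup{Im}(a_n(z))/\textup{Im}(z)$. One minor correction: off the real line (resp.\ off the unit circle) the conjugated CD denominators $z-\bar w$ and $1-z\bar w$ do not vanish at $w=z$, so no L'H\^opital/confluent expansion is needed for the diagonal kernels --- the only confluent ingredient the paper uses is the classical formula for $\sum_{j}p_j(z)^2$, i.e.\ for $K_n(z,\bar z)$.
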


We note that
\begin{equation*}
\text{Im}(a_n(z))=0 \iff  a_n(z)=\overline{a_n(z)}=a_n(\bar{z}) \iff  z\in \R.
\end{equation*}
Thus as written in the shape above (which is written as such for purposes of the computing the limit as $n\rightarrow \infty$), the intensity function $\rho_n$ in \eqref{hnOPRL} has singularities on the real axis due to the $\text{Im}(z)$ and $\text{Im}(a_n(z))$ in the denominators.  However these singularises exists only due to the way the intensity function is written.  For in the form of $\rho_n$ at \eqref{thm2.1}, the only potential singularity can come from when $K_n(z,z)=0$.  In the case of $f_j=p_j$, $j=0,1,\dots,n$ with $\{p_j\}$ being OPRL, it follows that $p_0(z)\neq 0$ giving $K_n(z,z)=\sum_{j=0}^n|p_j(z)|^2>0$.  Thus the intensity function \eqref{hnOPRL} is well defined and continuous everywhere on $\C$.

The restriction $|z|\neq 1$ in \eqref{hnOPUC} of Theorem \ref{OP} is present due to the use and hence assumptions of the Christoffel-Darboux formula for OPUC.  This restriction is from the fact that only when $|z|=1$ do we have $|\varphi_{n+1}^{*}(z)|=|\varphi_{n+1}(z)|$ (i.e. $|b_n(z)|=1$).  Furthermore, it is known that all the zeros of $\varphi_{n+1}(z)$ lie in $\D$, and all the zeros of $\varphi_{n+1}^{*}(z)$ are outside of $\D$.  Thus these two polynomials cannot vanish simultaneously.

Our limiting results of $\rho_n$ will be phrased in terms of assumptions on the recurrence coefficients of the orthogonal polynomials.  For a sequence $\{p_n\}$ of OPRL, the Three Term Recurrence Relation (Theorem 3.2.1 \cite{SZ}) states
\begin{equation}\label{OPRLREC}
xp_{n}(z)=a_np_{n+1}(z)+b_np_n(z)+a_{n-1}p_{n-1}(z), \quad n=1,2,\dots,
\end{equation}
where the recurrence coefficient sequences $\{a_n\}$ and $\{b_n\}$ can be given explicitly in terms of the leading coefficient of $p_n$ and $p_{n-1}$.  Due to Nevai (Theorem 13 p.~33 \cite{NV}, see also Totik p.~99 \cite{TO}), the condition that $a_n\rightarrow a$ and $b_n\rightarrow b$ as $n\rightarrow \infty$, with $a\geq 0$ and $b\in \R$, is equivalent to
\begin{equation}\label{OPRLNevai}
\lim_{n\rightarrow \infty}\frac{p_{n+1}(z)}{p_n(z)}=\frac{z-b+\sqrt{(z-b)^2-4a^2}}{2},
\end{equation}
with the convergence being valid locally uniformly for $z \notin \text{supp}\ \mu.$
When \eqref{OPRLNevai} holds for a sequence $\{p_n\}$ of OPRL, we say that the sequence is in the Nevai Class.  We note that this class is sometimes denoted as $M(a,b)$.

The Three Term Recurrence Relation (Theorem 1.5.4 \cite{BS}) for a sequence $\{\varphi_n\}$ of OPUC  says
\begin{equation}\label{OPUCREC}
\varphi_{n+1}(z)=\frac{ z \varphi_n(z)-\bar{\alpha}_n\varphi_{n}^*(z)}{\sqrt{1-|\alpha_n|^2}},\quad n=0,1,\dots
\end{equation}
where sequence of recurrence coefficients $\{\alpha_n\}\subset \D$, and $\varphi_n^*(z)=z^n\overline{\varphi_n(1/\bar{z})}$.  From the recurrence relation it can be seen that when $\{\alpha_n\}\subset (-1,1)$, the sequence $\{\varphi_n\}$ will be real-valued on the real line. Furthermore, in this case it known that there exists a unique conjugate-symmetric probability measure $\mu$ whose associated orthogonal polynomials satisfy \eqref{OPUCREC} (Theorem 1.7.11 \cite{BS}).  Hence one can refer to sequence $\{\varphi_n\}$ of OPUC  as defined by either the measure $\mu$ or the recurrence coefficients $\{\alpha_n\}$.  The ratio asymptotics (Theorem 1.7.4 of \cite{BS}) in this case are
\begin{equation}\label{OPUCNevai}
\lim_{n\rightarrow \infty}\alpha_n=0 \iff \lim_{n\rightarrow \infty} \frac{\varphi_n(z)}{\varphi_n^*(z)}=0,
\end{equation}
where the convergence holds locally uniformly for $z\in \D.$
When \eqref{OPUCNevai} holds for a sequence $\{\varphi_n\}$ of OPUC, we say that the sequence is from the Nevai Class.

\begin{corollary}\label{GOP2}
Let $P_n(z)=\sum_{j=0}^n \eta_j f_j(z)$, where $\{\eta_j\}_{j=0}^n$ are complex-valued i.i.d.~standard Gaussian random variables, and $\{f_j\}_{j=0}^n$ are orthogonal polynomials.
\begin{enumerate}
\item When $\{p_j\}$ are OPRL from the Nevai class, the intensity function $\rho_n$ from \eqref{hnOPRL} for the random orthogonal polynomial  satisfies
\begin{equation}\label{IOA}
\lim_{n\rightarrow \infty}\rho_n(z)= \frac{1}{4\pi \left(\emph{\text{Im}}(z)\right)^2}-\frac{|z-b+\sqrt{(z-b)^2-4a^2}|^2}{4\pi|(z-b)^2-4a^2|( \emph{\text{Im}}(z+\sqrt{(z-b)^2-4a^2}\ ) )^2},
\end{equation}
 locally uniformly for all $z\notin \text{supp}\ \mu$.

\item Let $\{\varphi_j\}$ be OPUC from the Nevai class such that their associated recurrence coefficients satisfy $\{\alpha_j\}\subset (-1,1)$.   Then the intensity function $\rho_n$ in \eqref{hnOPUC} for the random orthogonal polynomial  possess
\begin{equation}\label{LIOPUC}
\lim_{n\rightarrow \infty}\rho_n(z)= \frac{1}{\pi(1-|z|^2 )^2},
\end{equation}
locally uniformly for all $z\in\C\setminus \T$.
\end{enumerate}
\end{corollary}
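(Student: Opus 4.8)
The strategy is to substitute the ratio asymptotics \eqref{OPRLNevai} and \eqref{OPUCNevai} into the reduced intensity formulas \eqref{hnOPRL} and \eqref{hnOPUC} of Theorem~\ref{OP}, using throughout that locally uniform convergence of analytic functions is inherited by their derivatives (Weierstrass).

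For part~(1), I would first confine the $n$-dependence of \eqref{hnOPRL} to a single holomorphic block. Since the $p_j$ have real coefficients, $\overline{a_n(z)}=a_n(\bar z)$ and $\overline{a_n'(z)}=a_n'(\bar z)$, so $|a_n'(z)|^2=a_n'(z)a_n'(\bar z)$, $(\text{Im}\,a_n(z))^2=-\tfrac14(a_n(z)-a_n(\bar z))^2$, and likewise $(\text{Im}\,z)^2=-\tfrac14(z-\bar z)^2$; plugging these into \eqref{hnOPRL} collapses it to
\[
\rho_n(z)=\frac{1}{4\pi(\text{Im}\,z)^2}+\frac{1}{\pi}\cdot\frac{a_n'(z)\,a_n'(\bar z)}{(a_n(z)-a_n(\bar z))^2}.
\]
By \eqref{OPRLNevai}, $a_n=p_{n+1}/p_n\to a_\infty$ locally uniformly on $\C\setminus\text{supp}\,\mu$, where $a_\infty(z)=\tfrac12\bigl(z-b+\sqrt{(z-b)^2-4a^2}\bigr)$, and hence $a_n'\to a_\infty'$ locally uniformly there. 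Since $a_\infty$ inverts the map $w\mapsto w+a^2/w+b$ it is injective on $\C\setminus[b-2a,b+2a]\supseteq\C\setminus\text{supp}\,\mu$, so $a_\infty(z)\neq a_\infty(\bar z)$ whenever $z\notin\R$, and letting $n\to\infty$ in the displayed identity gives
\[
\lim_{n\to\infty}\rho_n(z)=\frac{1}{4\pi(\text{Im}\,z)^2}+\frac{1}{\pi}\cdot\frac{a_\infty'(z)\,a_\infty'(\bar z)}{(a_\infty(z)-a_\infty(\bar z))^2}
\]
locally uniformly on $(\C\setminus\text{supp}\,\mu)\setminus\R$. A direct computation gives $a_\infty'(z)=a_\infty(z)/\sqrt{(z-b)^2-4a^2}$, whence $a_\infty'(z)a_\infty'(\bar z)=|z-b+\sqrt{(z-b)^2-4a^2}|^2/\bigl(4|(z-b)^2-4a^2|\bigr)$ and $(a_\infty(z)-a_\infty(\bar z))^2=-\bigl(\text{Im}(z+\sqrt{(z-b)^2-4a^2})\bigr)^2$; substituting these yields exactly \eqref{IOA}. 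For real $z$ outside $\text{supp}\,\mu$ both terms in the two displays above blow up, but their sum stays bounded (as already observed for \eqref{hnOPRL}, the apparent singularity is removable); one checks that the continuous extensions match and that the convergence is locally uniform across these points as well.

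For part~(2), I would split $\C\setminus\T$ into $\D$ and $\C\setminus\overline{\D}$. On $\D$, \eqref{OPUCNevai} gives $b_n=\varphi_{n+1}/\varphi_{n+1}^*\to0$ locally uniformly (the $b_n$ are analytic on $\overline{\D}$ since $\varphi_{n+1}^*$ has no zeros there), so $b_n'\to0$ locally uniformly, hence $k_n(z)=(1-|z|^2)b_n'(z)/(1-|b_n(z)|^2)\to0$ and \eqref{hnOPUC} yields \eqref{LIOPUC}. On $\C\setminus\overline{\D}$, the hypothesis $\{\alpha_j\}\subset(-1,1)$ makes $\varphi_{n+1}$ have real coefficients, so $\varphi_{n+1}^*(z)=z^{n+1}\varphi_{n+1}(1/z)$ and $b_n(z)b_n(1/z)=1$; therefore $g_n(z):=1/b_n(z)=b_n(1/z)=\varphi_{n+1}^*(z)/\varphi_{n+1}(z)$ is analytic on $\{|z|\ge1\}$, and composing $b_n\to0$ with $z\mapsto1/z$ gives $g_n\to0$, and then $g_n'\to0$, locally uniformly on $\{|z|>1\}$. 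Using $b_n'=-g_n'/g_n^2$ and $|b_n|^2=1/|g_n|^2$ and cancelling $|g_n|^2$ turns the formula for $k_n$ into
\[
|k_n(z)|=\frac{(|z|^2-1)\,|g_n'(z)|}{\bigl|\,1-|g_n(z)|^2\,\bigr|},
\]
an identity valid by continuity even at the zeros of $g_n$; letting $n\to\infty$ we get $|k_n(z)|\to0$ locally uniformly on $\{|z|>1\}$, and \eqref{hnOPUC} again yields \eqref{LIOPUC}.

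The work is essentially bookkeeping rather than anything deep. In part~(1) the point requiring care is that the apparent singularities of \eqref{hnOPRL} on $\R$ — and the matching $\infty-\infty$ in \eqref{IOA} — are genuinely removable and that the convergence is locally uniform across them. In part~(2) the subtlety is that $b_n$ has no limit when $|z|>1$ (indeed $|b_n|\to\infty$), so one must pass to $g_n=1/b_n$ and verify that the indeterminate combination $g_n'/g_n^2$ appearing in $k_n$ really does collapse to the harmless quotient above before the limit is taken.
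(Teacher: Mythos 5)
Your proposal is correct and takes essentially the same route as the paper: differentiate the Nevai-class ratio asymptotics (justified by locally uniform convergence) and substitute into the simplified intensity formulas \eqref{hnOPRL} and \eqref{hnOPUC}, treating the exterior of the disk via the reciprocal ratio $\varphi_{n+1}^*/\varphi_{n+1}$ and the reality of the coefficients. Your regrouping of $\rho_n$ in terms of $a_n(z)$, $a_n(\bar z)$ and of $k_n$ in terms of $g_n=1/b_n$ is a cosmetic reorganization of the paper's computation (which takes limits of $a_n'$ and $\text{Im}\,a_n$ separately and refactors $\rho_n$ itself through $l_n$), with your remarks on the removable real-axis singularity going slightly beyond what the paper records.
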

When $a=1/2$ and $b=0$ in the definition of the Nevai class for the OPRL \eqref{OPRLNevai}, it is known that this class contains contains the Chebyshev polynomials.  The result of \eqref{IOA} extends the limiting value given by Farahmand and Grigorash (Section 4 of \cite{FG}) in which the spanning functions of their random trigonometric polynomial can be modified to be the Chebyshev polynomials.  We note that the result of \eqref{LIOPUC} extends the limiting value of the first correlation function given by Peres and Vir\'{a}g \cite{PV} (i.e.~taking $n=1$ of their Theorem 1) when the spanning functions were the monomials to that of a very general basis of OPUC. The result further extends their work in that this limiting value also holds for exterior of unit circle.

From \eqref{hnOPUC} of Theorem \ref{OP} and \eqref{LIOPUC} of Theorem \ref{GOP2} we see that the intensity function and its limiting value for the random orthogonal polynomial spanned by OPUC is singular on the unit circle.  Assuming a little more on the measure $\mu$ associated to the OPUC we can quantify how the zeros approach the unit circle.
\begin{theorem}\label{OPUCBand}
Let $P_n(z)=\sum_{j=0}^n\eta_j\varphi_j(z)$, where $\{\eta_j\}$ are complex-valued i.i.d.~standard Gaussian random variables, and $\{\varphi_j\}$ are OPUC such that their associated recurrence coefficients satisfy $\{\alpha_j\}\subset(-1,1)$ with $ \alpha_j \to 0 $ as  $ j\to\infty $. Let $ S $ be a compact subset of $ \T\setminus\{\pm1\} $. Assume, in addition, that the measure $ \mu $ associated to the sequence $\{\varphi_j\}$ is absolutely continuous with respect to the arclength measure on an open set containing $ S $ and its Radon-Nikodym derivative is positive and continuous at each point of $ S $. Given $ -\infty<\tau_1<\tau_2<\infty $, it follows that
\begin{equation}\label{BandLim}
\lim_{n\rightarrow \infty}\frac1n \E\big[N\big(\Omega(S,\tau_1,\tau_2)\big)\big] = \frac{|S|}{2\pi}\left(\frac{H^\prime(\tau_2)}{H(\tau_2)}-\frac{H^\prime(\tau_1)}{H(\tau_1)}\right),
\end{equation}
where $ \Omega(S,\tau_1,\tau_2) := \big\{ rz: z\in S, \; r\in(1+\frac{\tau_1}{2n},1+\frac{\tau_2}{2n}) \big\} $ and $ \displaystyle H(\tau) := \frac{e^\tau-1}{\tau} $.
\end{theorem}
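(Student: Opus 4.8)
The plan is to integrate the (simplified) intensity function over the thin annular sector $\Omega(S,\tau_1,\tau_2)$, pass the limit through the integral, and recognize the resulting one–dimensional integral as the increment of $H'/H$. Starting from part (2) of Theorem~\ref{OP}, for $|z|\neq 1$ we have $\rho_n(z) = \big(1-|k_n(z)|^2\big)\big/\big(\pi(1-|z|^2)^2\big)$, so
$$\E\big[N\big(\Omega(S,\tau_1,\tau_2)\big)\big] = \int_{\Omega(S,\tau_1,\tau_2)} \frac{1-|k_n(z)|^2}{\pi(1-|z|^2)^2}\, dA(z).$$
The first step is a change of variables $z = re^{i\theta}$ with $\theta$ parametrizing $S$ (writing $z\in S$, $|S|$ its arclength) and $r = 1+\frac{\tau}{2n}$, $\tau\in(\tau_1,\tau_2)$, so $dr = \frac{1}{2n}\,d\tau$ and $dA(z) = r\,dr\,d\theta$. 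Since $1-r^2 = -\frac{\tau}{n}\big(1+\frac{\tau}{4n}\big)$, the factor $(1-|z|^2)^2$ contributes $\frac{\tau^2}{n^2}(1+o(1))$, and after multiplying by $\frac1n$ the prefactor $\frac1n\cdot\frac{1}{2n}\cdot n^2 = \frac12$ emerges; thus
$$\frac1n\,\E\big[N\big(\Omega(S,\tau_1,\tau_2)\big)\big] = \frac{1}{2\pi}\int_{S}\int_{\tau_1}^{\tau_2} \frac{1-|k_n(r e^{i\theta})|^2}{\tau^2}\,(1+o(1))\,d\tau\,|d\theta|.$$

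The heart of the argument is the pointwise (in $\tau$, uniformly for $z\in S$) asymptotics of $|k_n(r_n e^{i\theta})|^2$ as $n\to\infty$ along $r_n = 1+\frac{\tau}{2n}$. Here I would invoke the hypotheses on $\mu$: absolute continuity near $S$ with positive continuous Radon–Nikodym derivative, together with $\alpha_j\to 0$. Under these ``local Szeg\H{o}'' conditions one has precise control of $\varphi_{n+1}$ and $\varphi_{n+1}^*$ near $S$ — in particular $|\varphi_{n+1}(z)/\varphi_{n+1}^*(z)| = |b_n(z)|$ behaves, for $z = (1+\tfrac{\tau}{2n})e^{i\theta}$, like $e^{-\tau}(1+o(1))$ (the modulus of the ratio detects only the distance to the circle at this scale, through the exponential growth rate governed by the equilibrium/Szeg\H{o} asymptotics), while the logarithmic derivative satisfies $b_n'(z)/b_n(z) \sim -n/z$ so that $(1-|z|^2)b_n'(z) \to$ a computable limit. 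Substituting $|b_n|^2\to e^{-2\tau}$ into $k_n = (1-|z|^2)b_n'/(1-|b_n|^2)$ and squaring, a direct computation gives
$$1 - |k_n(r_n e^{i\theta})|^2 \;\longrightarrow\; 1 - \frac{\tau^2 e^{-2\tau}}{(1-e^{-2\tau})^2},$$
and one checks the elementary identity $\dfrac{1}{\tau^2}\left(1 - \dfrac{\tau^2 e^{-2\tau}}{(1-e^{-2\tau})^2}\right) = \dfrac{d}{d\tau}\!\left(\dfrac{H'(\tau)}{H(\tau)}\right)$ for $H(\tau) = (e^\tau-1)/\tau$. Granting this, the inner $\tau$–integral telescopes to $\frac{H'(\tau_2)}{H(\tau_2)} - \frac{H'(\tau_1)}{H(\tau_1)}$, the $\theta$–integral contributes $|S|$, and \eqref{BandLim} follows.

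To make the limit interchange rigorous I would establish a uniform bound: on the relevant shrinking sector $|k_n(z)|\le 1$ (so $0 \le 1-|k_n|^2 \le 1$) and, more carefully, $\big(1-|k_n(z)|^2\big)/\tau^2$ is dominated near $\tau=0$ (using that $k_n(z)\to 1$ there at the right rate, matching the removable singularity of the limit integrand at $\tau=0$) and is integrable near the endpoints; then dominated convergence applies on the compact $\tau$–interval and the $o(1)$ error terms are absorbed. The main obstacle is precisely the uniform asymptotic $|b_n((1+\tfrac{\tau}{2n})e^{i\theta})|^2 \to e^{-2\tau}$ and the companion estimate for $(1-|z|^2)b_n'$ — i.e. translating the ``$\mu$ nice on a neighborhood of $S$, Verblunsky coefficients $\to 0$'' hypotheses into boundary asymptotics for $\varphi_{n+1}$ and $\varphi_{n+1}^*$ at the $1/n$ scale. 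This is where I would lean on the strongest available local Szeg\H{o}-type results (Bernstein–Szeg\H{o} approximation to reduce to an explicitly solvable model, or known Máté–Nevai–Totik universality on the circle), reducing the general case to the model measure $d\mu = \frac{d\theta}{2\pi}$ where $\varphi_n(z) = z^n$, $\varphi_n^*(z)=1$, $b_n(z) = z^{n+1}$, and $|b_n(r e^{i\theta})| = r^{n+1} = (1+\tfrac{\tau}{2(n+1)}\cdot\tfrac{n+1}{n})^{n+1}\to e^{\tau/2}$ — wait, with $r=1+\tfrac{\tau}{2n}$ one gets $r^{n+1}\to e^{\tau/2}$, so in fact $|b_n|^2\to e^{\tau}$; one then reconciles signs so that the decay of the ratio $\varphi_{n+1}/\varphi_{n+1}^*$ toward the relevant side of $\T$ produces the correct $e^{-2\tau}$ (equivalently, parametrize so that $r\in(1+\tfrac{\tau_1}{2n},1+\tfrac{\tau_2}{2n})$ with the orientation fixed by which of $\varphi_{n+1},\varphi_{n+1}^*$ dominates), after which the general case follows by the continuity and positivity assumptions on $d\mu/d\theta$ via a standard perturbation/comparison argument.
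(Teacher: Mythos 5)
Your outer reduction (change of variables $r=1+\tfrac{\tau}{2n}$, extraction of the factor $\tfrac12\int_S|dz|$, and the identification of the $\tau$-integrand with $\bigl(H'/H\bigr)'$) matches the paper's first step. But the heart of your argument --- uniform scaling asymptotics for $b_n=\varphi_{n+1}/\varphi_{n+1}^*$ and $(1-|z|^2)b_n'$ at points $z(1+\tfrac{\tau}{2n})$, $z\in S$ --- is exactly the part you do not prove, and it is not available under the stated hypotheses by the references you gesture at. The assumptions ($\alpha_j\to0$ plus local absolute continuity with positive continuous density near $S$) do not give a Szeg\H{o} condition ($\alpha_j\to0$ does not imply $\sum\alpha_j^2<\infty$), so M\'at\'e--Nevai--Totik/Szeg\H{o}-type pointwise asymptotics for the individual polynomials $\varphi_{n+1},\varphi_{n+1}^*$ at the $1/n$ scale cannot simply be invoked, and the ``standard perturbation/comparison argument'' reducing to the Bernstein--Szeg\H{o} model is precisely the hard analytic content you would have to supply. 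Your sketch also contains an internal inconsistency that signals this: you first posit $|b_n|^2\to e^{-2\tau}$ and assert the identity $\tau^{-2}\bigl(1-\tau^2e^{-2\tau}/(1-e^{-2\tau})^2\bigr)=(H'/H)'(\tau)$, which is false; the correct identity is $(H'/H)'(\tau)=\tau^{-2}-e^{\tau}/(e^{\tau}-1)^2$, and indeed your own Lebesgue-model computation gives $|b_n|^2\to e^{\tau}$ and $|(1-|z|^2)b_n'|\to|\tau|e^{\tau/2}$, which is what actually produces $(H'/H)'$; the ``reconciling signs'' step is left unresolved.

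The paper avoids individual-polynomial asymptotics altogether: it keeps $\rho_n$ in the kernel form \eqref{thm2.1} rather than the simplified form \eqref{hnOPUC}, notes that $\alpha_j\to0$ forces Ullman--Stahl--Totik regularity via $\kappa_n=\prod_{j=0}^n(1-\alpha_j^2)^{-1/2}$, and then applies the Levin--Lubinsky universality result (Theorem \ref{LL}) to get $K_n\bigl(z(1+\tfrac un),z(1+\tfrac{\bar v}{n})\bigr)/K_n(z,z)\to H(u+v)$ uniformly for $z\in S$ and $u,v$ in compacts; since this convergence is uniform in the complex parameters, it can be differentiated in $u$ and $v$, yielding $n^{-2}\rho_n\bigl(z(1+\tfrac{\tau}{2n})\bigr)\to\pi^{-1}\bigl(H H''-(H')^2\bigr)/H^2=\pi^{-1}(H'/H)'$ uniformly, after which the integration step you already have finishes the proof. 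To repair your route you would either have to prove the uniform $1/n$-scale ratio asymptotics for $b_n$ and $b_n'$ under the Nevai-class-plus-local hypotheses (a nontrivial result not in your toolkit as cited), or switch, as the paper does, to the Christoffel--Darboux kernel and its known universality limit.
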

We note that $ H^\prime/H $ is increasing on the real line with
$$
\lim_{\tau\to-\infty}\frac{H^\prime(\tau)}{H(\tau)}=0 \quad \text{and} \quad \frac{H^\prime(\tau)}{H(\tau)} =  1 - \frac{H^\prime(-\tau)}{H(-\tau)}.
$$
Thus in the setting of Theorem \ref{OPUCBand}, the zeros of a random orthogonal polynomial spanned by OPUC approaching $ S $ are expected to be contained in an annular band around $ S $ of width $ n^{-1+\epsilon} $ for any $ \epsilon>0 $.

We note that when the coefficients of the random orthogonal polynomial spanned by OPUC satisfying the conditions of Theorem \ref{OPUCBand} are real-valued i.i.d.~standard Gaussian random variables, the analog of the above result was recently proved Yattselev and the author (c.f. Theorem 1.7 of \cite{YY}).  Remarkably, both the cases of random orthogonal polynomials with real-valued or complex-valued coefficients yield the same asymptotic in \eqref{BandLim}.

\section{Proofs}

\subsection{The Intensity Function for Random Orthogonal Polynomials}

In this section we use the Christoffel-Darboux formula for OPRL and OPUC to simplify the kernels $K_{n}(z,z)$, $K_{n}^{(0,1)}(z,z)$, and $K_{n}^{(1,1)}(z,z)$ which make up the intensity function $\rho_n$ from \eqref{thm2.1}.  For convenience of the reader, we state the Christoffel-Darboux formula for OPRL  (Theorem 3.2.2, p. 43 of \cite{SZ}): for $z,w\in \C$ and $\{p_j\}_{j\geq 0}$  OPRL, with $k_j$ being the leading coefficient of $p_j$, we have
\begin{equation}\label{CD1}
\sum_{j=0}^{n}p_j(z)p_j(w)=\frac{k_{n}}{k_{n+1}}\cdot \frac{p_{n+1}(z)p_{n}(w)-p_{n}(z)p_{n+1}(w)}{z-w}, \ \  z\neq w.
\end{equation}
Furthermore, on the diagonal $z=w$ it takes the form
\begin{equation}\label{CD2}
\sum_{j=0}^{n}\left(p_j(z)\right)^2=\frac{k_{n}}{k_{n+1}}\cdot (p_{n+1}^{\prime}(z)p_{n}(z)-p_{n}^{\prime}(z)p_{n+1}(z)).
\end{equation}

For a collection of OPUC $\{\varphi_j\}_{j\geq 0}$, the Christoffel-Darboux formula for OPUC (Theorem 2.2.7, p. 124 of \cite{BS}) states that for $z,w\in \C$ with $\bar{w}z\neq 1$, we have
\begin{equation}\label{CDUP}
\sum_{j=0}^{n}\varphi_j(z)\overline{\varphi_j(w)}= \frac{\overline{\varphi_{n+1}^{*}(w)}\varphi_{n+1}^{*}(z)-\overline{\varphi_{n+1}(w)}\varphi_{n+1}(z)}{1-\bar{w}z},
\end{equation}
where $\varphi_n^{*}(z)=z^n \overline{\varphi_n\left(\frac{1}{\bar{z}}\right)}$.

Before obtaining our representations of the kernels, let us note that since the polynomials $\{p_j\}$  are orthogonal on the real line, and since we are assuming that the recurrence coefficients $\{\alpha_j\}$ associated to $\{\varphi_j\}$ satisfy $\{\alpha_j\}\subset (-1,1)$, both classes of orthogonal polynomials have real coefficients.  Thus when using conjugation we have that $\overline{p_j(z)}=p_j(\bar{z})$ and $\overline{\varphi_j(z)}=\varphi_j(\bar{z})$ for all $j=0,1,\dots$, and all $z\in \C$.

\begin{proof}[Proof of \eqref{hnOPRL} in Theorem \ref{OP}]
For $z\neq w$, taking derivatives of \eqref{CD1} yields
\begin{align}\nonumber
\sum_{j=0}^{n}p_j(z)p_j^{\prime}(w)&=\frac{k_{n}}{k_{n+1}} \left(\frac{p_{n+1}(z)p_{n}^{\prime}(w)-p_{n}(z)p_{n+1}^{\prime}(w)}{z-w}
   +\frac{p_{n+1}(z)p_{n}(w)-p_{n}(z)p_{n+1}(w) }{(z-w)^2}\right)\\
   \label{CDw}
   &=\frac{k_{n}}{k_{n+1}} \cdot\frac{p_{n+1}(z)p_{n}^{\prime}(w)-p_{n}(z)p_{n+1}^{\prime}(w)}{z-w}+\frac{\sum_{j=0}^np_j(z)p_j^{\prime}(w)}{z-w},
\end{align}
and
\begin{align}
\nonumber
\sum_{j=0}^{n}p_j^{\prime}(z)p_j^{\prime}(w)&=\frac{k_{n}}{k_{n+1}} \Bigg(\frac{p_{n+1}^{\prime}(z)p_{n}^{\prime}(w)-p_{n}^{\prime}(z)p_{n+1}^{\prime}(w)}{z-w}
   -\frac{p_{n+1}(z)p_{n}^{\prime}(w)-p_{n}(z)p_{n+1}^{\prime}(w) }{(z-w)^2}\\
   \nonumber
   & \qquad \qquad + 
   \frac{p_{n+1}^{\prime}(z)p_{n}(w)-p_{n}^{\prime}(z)p_{n+1}(w) }{(z-w)^2}
   -\frac{2\left(p_{n+1}(z)p_{n}(w)-p_{n}(z)p_{n+1}(w)\right) }{(z-w)^2}\Bigg)\\
   \label{CDwz}
   &=\frac{k_{n}}{k_{n+1}} \cdot\frac{p_{n+1}^{\prime}(z)p_{n}^{\prime}(w)-p_{n}^{\prime}(z)p_{n+1}^{\prime}(w)}{z-w}-\frac{\sum_{j=0}^np_j(z)p_j^{\prime}(w)}{z-w}+
   \frac{\sum_{j=0}^np_j^{\prime}(z)p_j(w)}{z-w}.
\end{align}

Setting $w=\bar{z}$ in \eqref{CD1}, \eqref{CDw}, and \eqref{CDwz}, since the coefficients of $\{p_j\}$ are real it follows that
\begin{align}\label{CDB01}
K_{n}(z,z)&=\sum_{j=0}^{n}p_j(z)\overline{p_j(z)}=\frac{k_{n}}{k_{n+1}}\cdot \frac{p_{n+1}(z)p_{n}(\bar{z})-p_{n}(z)p_{n+1}(\bar{z})}{2i\text{Im}(z)},\\
\label{CDB1}
K_{n}^{(0,1)}(z,z)&=\sum_{j=0}^{n} p_j(z)\overline{p_j^{\prime}(z)}
=\frac{k_{n}}{k_{n+1}} \cdot\frac{p_{n+1}(z)p_{n}^{\prime}(\bar{z})-p_{n}(z)p_{n+1}^{\prime}(\bar{z})}{2i\text{Im}(z)}+\frac{K_{n}(z,z) }{2i\text{Im}(z)},\\
\label{CDB2}
K_{n}^{(1,1)}(z,z)&=\sum_{j=0}^{n}p_j^{\prime}(z)\overline{p_j^{\prime}(z)}
=\frac{k_{n}}{k_{n+1}}\cdot\frac{\text{Im}(p_{n+1}^{\prime}(z)p_{n}^{\prime}(\bar{z}))}{\text{Im}(z)}-\frac{K_{n}^{(0,1)}(z,z)}{2i\text{Im}(z)}
+\frac{\overline{K_{n}^{(0,1)}(z,z)}}{2i\text{Im}(z)}.
\end{align}

For our representation of $K_{n}(z,\bar{z})$ we simply use \eqref{CD2} and again that the coefficients of $\{p_j\}$ are real to achieve
\begin{align}\label{CDA0}
K_{n}(z,\bar{z})&=\sum_{j=0}^{n}p_j(z)\overline{p_j(\bar{z})}=\sum_{j=0}^{n}p_j(z)p_j(z)
=\frac{k_{n}}{k_{n+1}}\left(p_{n+1}^{\prime}(z)p_{n}(z)-p_{n}^{\prime}(z)p_{n+1}(z)\right).
\end{align}

Using our derived expressions \eqref{CDB01}, \eqref{CDB1}, \eqref{CDB2}, and \eqref{CDA0}, the numerator of the intensity function $\rho_n$ from \eqref{thm2.1} simplifies as
\begin{align}
\nonumber
K_{n}^{(1,1)}&(z,z)K_{n}(z,z)-|K_{n}^{(0,1)}(z,z)|^2
=\frac{\left(K_{n}(z,z) \right)^2-\left| K_{n}(z,\bar{z}) \right|^2}{4\left(\text{Im}(z)\right)^2}.
\end{align}

Therefore, using the expression for the numerator above and recalling the relations \eqref{CDB01} and \eqref{CDA0}, we see that the intensity function given by \eqref{thm2.1} is
\begin{align*}
\rho_n(z)&=\frac{K_{n}^{(1,1)}(z,z)K_{n}(z,z)-|K_{n}^{(0,1)}(z,z)|^2}{\pi  \left(K_{n}(z,z)\right)^2} \\
&=\frac{1}{4\pi\left(\text{Im}(z)\right)^2}\left(1-\frac{\left|K_{n}(z,\bar{z})\right|^2}{\left(K_{n}(z,z)\right)^2}\right)\\
&=\frac{1}{4\pi\left(\text{Im}(z)\right)^2}\left(1-\frac{(2i \textup{Im}(z))^2\left| p_{n+1}^{\prime}(z)p_n(z)-p_n^{\prime}(z)p_{n+1}(z)\right|^2}{\left(p_{n+1}(z)p_n(\bar{z})-p_n(z)p_{n+1}(\bar{z})\right)^2}\right)\\
&=\frac{1}{4\pi \left(\text{Im}(z)\right)^2}\left( 1-\frac{(2i \text{Im}(z))^2\left|\left( \frac{p_{n+1}(z)}{p_n(z)} \right)^{\prime}\right|^2}{\left(\frac{p_{n+1}(z)}{p_n(z)}-\frac{p_{n+1}(\bar{z})}{p_n(\bar{z})}\right)^2} \right)\\
&=\frac{1-h_n(z)^2}{4\pi\left(\text{Im}(z)\right)^2},
\end{align*}
where
\begin{equation*}
h_n(z)=\frac{\text{Im}(z)|a_n^{\prime}(z)|}{\text{Im}(a_n(z))}, \quad a_n(z)=\frac{p_{n+1}(z)}{p_n(z)},
\end{equation*}
which gives the result of \eqref{hnOPRL} in Theorem \ref{OP}.
\end{proof}

\begin{proof}[Proof of \eqref{hnOPUC} in Theorem \ref{OP}]
Applying the Christoffel-Darboux formula for OPUC from \eqref{CDUP}, and making derivations analogously as done for the kernels for OPRL, our representations of $K_n(z,z)$, $K_n^{(0,1)}(z,z)$, and $K_n^{(1,1)}(z,z)$ are as follows:
\begin{align}\label{CDBOUP}
K_{n}(z,z)&=\sum_{j=0}^{n}\varphi_j(z)\overline{\varphi_j(z)} =\frac{\left|\varphi_{n+1}^{*}(z)\right|^2-\left|\varphi_{n+1}(z)\right|^2}{1-\left|z\right|^2},
\end{align}

\begin{align}\label{CDB1UP}
K_n^{(0,1)}(z,z)&=\sum_{j=0}^{n}\varphi_j(z)\overline{\varphi_j^{\prime}(z)}=\frac{\overline{\varphi_{n+1}^{*\ \prime}(z)}\varphi_{n+1}^{*}(z)-\overline{\varphi_{n+1}^{\prime}(z)}\varphi_{n+1}(z)}{1-|z|^2}
+\frac{zK_n(z,z)}{1-|z|^2},
\end{align}
and
\begin{align}\label{CDB2UP}
K_n^{(1,1)}(z,z)&=\sum_{j=0}^n|\varphi_j^{\prime}(z)|^2
=\frac{|\varphi_{n+1}^{* \ \prime}(z)|^2-|\varphi_{n+1}^{\prime}(z)|^2}{1-|z|^2}+ \frac{\bar{z}K_n^{(0,1)}(z,z)
 +z\overline{ K_n^{(0,1)}(z,z)}+K_n(z,z)}{1-|z|^2}.
\end{align}

Using \eqref{CDBOUP}, \eqref{CDB1UP}, and \eqref{CDB2UP}, the numerator of the intensity function $\rho_n$ of \eqref{thm2.1} reduces to
\begin{align*}
K_n^{(1,1)}(z,z)K_n(z,z)-&|K_n^{(0,1)}(z,z)|^2
=\frac{\left(K_n(z,z)\right)^2}{\left(1-|z|^2\right)^2}-\frac{\left|\varphi_{n+1}^*(z)\varphi_{n+1}^{\prime}(z)-\varphi_{n+1}^{*\ \prime}(z)\varphi_{n+1}(z)  \right|^2  }{ \left(1-|z|^2\right)^2  }.
\end{align*}

From the above numerator and \eqref{CDBOUP}, the intensity function at \eqref{thm2.1} becomes
\begin{align}\nonumber
\rho_n(z)&=\frac{K_{n}^{(1,1)}(z,z)K_{n}(z,z)-\left|K_{n}^{(0,1)}(z,z)\right|^2}{\pi \left(K_{n}(z,z)\right)^2} \\
\nonumber
&=\frac{1}{\pi\left(1-|z|^2 \right)^2}\left(1-\frac{\left|\varphi_{n+1}^{*}(z)\varphi_{n+1}^{\prime}(z)-\varphi_{n+1}^{*\ \prime}(z)\varphi_{n+1}(z)  \right|^2}{\left(K_n(z,z)\right)^2}\right)\\
\label{facrho}
&=\frac{1}{\pi\left(1-|z|^2 \right)^2}\left(1-\frac{(1-|z|^2)^2\left|\varphi_{n+1}^{*}(z)\varphi_{n+1}^{\prime}(z)-\varphi_{n+1}^{*\ \prime}(z)\varphi_{n+1}(z)  \right|^2}{\left(|\varphi_{n+1}(z)|^2-|\varphi_{n+1}^*(z)|^2\right)^2}\right)\\
\nonumber
&=\frac{1}{\pi\left(1-|z|^2\right)^2}\left(1- \frac{ (1-|z|^2)^2 \left|\left(\frac{\varphi_{n+1}(z)}{\varphi_{n+1}^*(z)}\right)^{\prime}\right|^2}{\left(\left|\frac{\varphi_{n+1}(z)}{\varphi_{n+1}^*(z)}\right|^2-1
\right)^2}\right)\\
\nonumber
&=\frac{1-|k_n(z)|^2}{\pi(1-|z|^2)^2},
\end{align}
where
\begin{equation*}
k_n(z)=\frac{(1-|z|^2)b_n^{\prime}(z)}{1-|b_n(z)|^2}, \quad b_n(z)=\frac{\varphi_{n+1}(z)}{\varphi_{n+1}^*(z)},
\end{equation*}
and hence completes the proof of \eqref{hnOPUC} in Theorem \ref{OP}.
\end{proof}

\subsection{The Limiting Value of the Intensity Function for Random Orthogonal Polynomials Associated to the Nevai Class}

\begin{proof}[Proof of \eqref{IOA} in Theorem \ref{GOP2}]
Since the convergence of \eqref{OPRLNevai} is uniform on compact subsets away from the support of $\mu$, for $z\notin \text{supp}\ \mu$ we can differentiate to yield
\begin{align}\nonumber
\lim_{n\rightarrow \infty}a_n^{\prime}(z)&=\lim_{n\rightarrow \infty}\left(\frac{p_{n+1}(z)}{p_n(z)}\right)^{\prime}\\
\nonumber
&=\frac{d}{dz}\left( \frac{z-b+\sqrt{(z-b)^2-4a^2}}{2} \right)\\
\label{pnp}
&=\frac{z-b+\sqrt{(z-b)^2-4a^2}}{2\sqrt{(z-b)^2-4a^2}}.
\end{align}
Also from \eqref{OPRLNevai} we see that
\begin{align}\nonumber
\lim_{n\rightarrow \infty}\textup{Im}(a_n(z))&=\lim_{n\rightarrow \infty} \frac{\frac{p_{n+1}(z)}{p_n(z)}-\frac{p_{n+1}(\bar{z})}{p_n(\bar{z})}}{2i}\\
\label{Impn}
&=\frac{z-\sqrt{(z-b)^2-4a^2}-(\bar{z}+\sqrt{(\bar{z}-b)^2-4a^2}}{4i}
\end{align}
Combining \eqref{pnp} and \eqref{Impn} gives
\begin{align*}
\lim_{n\rightarrow \infty}h_n(z)^2&=\lim_{n\rightarrow \infty}\frac{\left(\textup{Im}(z)\right)^2|a_n^{\prime}(z)|^2}{\left(\textup{Im}(a_n(z))\right)^2}=\frac{\left(\textup{Im}(z)\right)^2| z-b+\sqrt{(z-b)^2-4a^2} |^2}{|(z-b)^2-4a^2|(\textup{Im}(z+\sqrt{(z-b)^2-4a^2}\ ) )^2}.
\end{align*}
Therefore, using the representation of the intensity function in \eqref{hnOPRL} of Theorem \ref{OP}, from the above limit we see that
\begin{align*}
\lim_{n\rightarrow \infty}\rho_n(z)&=\lim_{n\rightarrow \infty}\frac{1-h_n^2(z)}{4\pi \left(\textup{Im}(z)\right)^2}\\
&=\frac{1}{4\pi \left(\emph{\textup{Im}}(z)\right)^2}-\frac{|z-b+\sqrt{(z-b)^2-4a^2}|^2}{4\pi|(z-b)^2-4a^2|( \emph{\textup{Im}}(z+\sqrt{(z-b)^2-4a^2}\ ))^2},
\end{align*}
locally uniformly for $z \notin \text{supp}\ \mu$, and thus completes the proof.
\end{proof}

\begin{proof}[Proof of \eqref{LIOPUC} in Theorem \ref{GOP2}]
Under the assumption that $\{\varphi_j\}$ are OPUC in the Nevai class, \eqref{OPUCNevai} gives
\begin{equation}\label{bn}
\lim_{n\rightarrow \infty}b_n(z)=\lim_{n\rightarrow \infty}\frac{\varphi_{n+1}(z)}{\varphi_{n+1}^*(z)}=0,
\end{equation}
uniformly on compact subsets of $\D$.  Since the convergence is locally uniform in $\D$, within $\D$ we can differentiate to achieve
\begin{equation}\label{bnp}
\lim_{n\rightarrow \infty}b_n^{\prime}(z)=\lim_{n\rightarrow \infty}\frac{d}{dz}\left(\frac{\varphi_{n+1}(z)}{\varphi_{n+1}^*(z)}\right)=0.
\end{equation}
Thus combining \eqref{bn} and \eqref{bnp} we see that
\begin{equation}
\lim_{n\rightarrow \infty} k_n(z)=\lim_{n\rightarrow \infty}\frac{(1-|z|^2)b_{n}^{\prime}(z)}{1-|b_{n}(z)|^2}=0.
\end{equation}
This gives that the intensity function in Theorem \ref{OP} represented by \eqref{hnOPUC} satisfies
\begin{equation*}
\lim_{n\rightarrow \infty}\rho_n(z)=\lim_{n\rightarrow \infty}\frac{1-|k_{n}(z)|^2}{\pi (1-|z|^2)^2}=\frac{1}{(1-|z|^2)^2}
\end{equation*}
locally uniformly on $\D$.

To see that the same limit holds in the exterior of the disk, observe that for $z^{-1}\in \D\setminus \{0\}$
\begin{equation}\label{phnout}
0=\lim_{n\rightarrow \infty}b_n\left(\frac{1}{z}\right)=\lim_{n\rightarrow \infty}\frac{\varphi_{n+1}(z^{-1})}{\varphi_{n+1}^*(z^{-1})}=\lim_{n\rightarrow \infty}\frac{\varphi_{n+1}(z^{-1})}{z^{-n}\varphi_{n+1}(z)}=\lim_{n\rightarrow \infty}\frac{\varphi_{n+1}^*(z)}{\varphi_{n+1}(z)},
\end{equation}
where on the second equality we have appealed to the hypothesis that the recurrence coefficients for the OPUC are such that $\{\alpha_j\}\subset (-1,1)$ so that the coefficients of $\{\varphi_j\}$ are real.

Notice that from \eqref{facrho} we can factor in a different manor to achieve
\begin{align*}
\rho_n(z)&=\frac{1}{\pi(1-|z|^2 )^2}\left(1-\frac{(1-|z|^2)^2|\varphi_{n+1}^{*}(z)\varphi_{n+1}^{\prime}(z)-\varphi_{n+1}^{*\ \prime}(z)\varphi_{n+1}(z)  |^2}{(|\varphi_{n+1}(z)|^2-|\varphi_{n+1}^*(z)|^2)^2}\right)\\
&=\frac{1}{\pi(1-|z|^2)^2}\left(1- \frac{ (1-|z|^2)^2 \left|\left(\frac{\varphi_{n+1}^*(z)}{\varphi_{n+1}(z)}\right)^{\prime}\right|^2}{\left(\left|\frac{\varphi_{n+1}^*(z)}{\varphi_{n+1}(z)}\right|^2-1
\right)^2}\right)\\
&=\frac{1-|l_n(z)|^2}{(1-|z|^2)^2},
\end{align*}
where
\begin{equation*}
l_n(z)=\frac{(1-|z|^2)c_n^{\prime}(z)}{1-|c_n(z)|^2}, \quad c_n(z)=\frac{\varphi_{n+1}^*(z)}{\varphi_{n+1}(z)}.
\end{equation*}
Using \eqref{phnout} and continuing analogously as done for the case in the unit circle, it follows that $l_n(z)\rightarrow 0$ locally uniformly for $z\in \C\setminus \overline{\D}$ as $n\rightarrow \infty$. Therefore
$$\lim_{n\rightarrow \infty}\frac{1-|l_n(z)|^2}{(1-|z|^2)^2}=\frac{1}{(1-|z|^2)^2}$$
uniformly on compact subsets of $\C\setminus \overline{\D}$, and hence gives our desired result.
\end{proof}

\subsection{Zeros of Random Orthogonal Polynomials spanned by OPUC in Shrinking Neighborhoods of the Unit Circle}

To prove Theorem \ref{OPUCBand} we will rely on a universality result by Levin and Lubinsky \cite{LevLub07}.  One of the hypothesis of their result requires that the measure $\mu$ associated to the OPUC $\{\varphi_j\}$ is regular in the sense of Ullman-Stahl-Totik, that is
\begin{equation}
\lim_{n\rightarrow \infty}\frac{\log |\kappa_n|}{n}=0,
\end{equation}
where $\kappa_n$ is the leading coefficient of $\varphi_n(z)$. We note that using equation 1.5.22 of \cite{BS}, it follows that
\begin{equation}\label{kn}
\kappa_n=\prod_{j=0}^{n}(1-\alpha_j^2)^{-1/2}.
\end{equation}
In our hypothesis of Theorem \ref{OPUCBand}, since we are assuming that the recurrence coefficients associated to $\{\varphi_j\}$ satisfy $ \alpha_j\to 0 $ as $ j \to\infty $, appealing to \eqref{kn} we see that
\begin{equation}\label{SUT}
\lim_{n\rightarrow \infty}\frac{\log |\kappa_n|}{n}=\lim_{n\rightarrow \infty}\frac{-\frac{1}{2}\sum_{j=0}^n\log |1-\alpha_j^2|}{n}=0
\end{equation}
so that the measure $ \mu $ is regular in the sense of Ullman-Stahl-Totik. For convenience of the reader, the result by Levin and Lubinsky we will use is the following:
\begin{theorem}[Theorem 6.3 Levin and Lubinsky \cite{LevLub07}]\label{LL}
Let $\mu$ be a finite positive Borel measure on $[-\pi,\pi)$ that is Ullman-Stahl-Totik regular.  Let $J\subset (-\pi,\pi)$ be compact, and such that $\mu$ is absolutely continuous in an open interval containing $J$.  Assume moreover, that $w=\mu^{\prime}$ is positive and continuous at each point of $J$.  Then uniformly for $a,b$ in compact subsets of the plane and $z=e^{i\theta}$, $\theta \in J$ and  we have
$$\lim_{n\rightarrow \infty}\frac{K_n\left(z\left(1+\frac{i2\pi a}{n}\right),z\left(1+\frac{i2\pi \overline{b}}{n}\right)\right)}{K_n(z,z)}=e^{i\pi(a-b)}\frac{\sin \pi (a-b)}{\pi(a-b)}.$$
\end{theorem}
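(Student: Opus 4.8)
The plan is to prove this universality limit by Lubinsky's localization-and-comparison method, reducing the general measure $\mu$ to the explicitly solvable model of normalized arclength measure $\mu_0 = d\theta/(2\pi)$, whose orthonormal polynomials are the monomials $\varphi_j(z)=z^j$ and whose Christoffel--Darboux kernel is a geometric sum. First I would record the model computation: for $\mu_0$ one has $K_n^0(z,w)=\sum_{j=0}^n (z\bar w)^j = \frac{(z\bar w)^{n+1}-1}{z\bar w -1}$ and $K_n^0(z,z)=n+1$ on $\T$. Setting $u=z(1+i2\pi a/n)$ and $v=z(1+i2\pi\bar b/n)$ with $|z|=1$, one computes $u\bar v = (1+i2\pi a/n)(1-i2\pi b/n) = 1 + i2\pi(a-b)/n + O(n^{-2})$, so $(u\bar v)^{n+1}\to e^{i2\pi(a-b)}$ while $u\bar v - 1 \sim i2\pi(a-b)/n$. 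Dividing by $K_n^0(z,z)=n+1$ and factoring $e^{i2\pi(a-b)}-1 = e^{i\pi(a-b)}\cdot 2i\sin\pi(a-b)$ yields exactly $e^{i\pi(a-b)}\frac{\sin\pi(a-b)}{\pi(a-b)}$, uniformly for $a,b$ in compact sets. Thus the model already exhibits the claimed limit, and it remains to transfer it to $\mu$.

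Second, I would install the analytic inputs that hold under the stated hypotheses. Ullman--Stahl--Totik regularity together with local absolute continuity and the positivity and continuity of $w=\mu'$ on $J$ yield, via the M\'at\'e--Nevai--Totik theorem, the Christoffel-function asymptotic $K_n(e^{i\theta},e^{i\theta})/n \to 1/(2\pi w(\theta))$ uniformly for $\theta\in J$, and the same regularity furnishes an upper bound showing that the diagonal ratio $K_n(u,u)/K_n(z,z)$ stays uniformly bounded as $u$ ranges over the $O(1/n)$ scaling window (its model limit is the bounded factor $\frac{1-e^{-4\pi\,\mathrm{Im}\,a}}{4\pi\,\mathrm{Im}\,a}$, so one cannot hope for convergence to $1$, only boundedness). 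Combined with the reproducing-kernel inequality $|K_n(u,v)|^2\le K_n(u,u)K_n(v,v)$, this shows the normalized kernels $f_n(a,b):=K_n(u,v)/K_n(z,z)$ are locally uniformly bounded. Since $f_n$ is entire jointly in $a$ and $b$ (the real coefficients of $\{\varphi_j\}$ make $\overline{\varphi_j(v)}=\varphi_j(\bar v)$ holomorphic in $b$), Montel's theorem makes $\{f_n\}$ a normal family, so every subsequence has a locally uniformly convergent sub-subsequence.

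Third comes the comparison step, which is the crux. Using the extremal characterization $\lambda_n(z)=1/K_n(z,z)=\min\{\int|P|^2\,d\mu:\deg P\le n,\ P(z)=1\}$ together with the reproducing property of $K_n$, I would derive Lubinsky's comparison inequality bounding $|f_n^{\mu}(a,b)-f_n^{\mu_0}(a,b)|$ by a quantity governed by the mismatch of the two local densities. Since $f_n$ is invariant under constant rescaling of the measure, I compare $\mu$ directly with $\mu_0$; because the local density of $\mu$ at $e^{i\theta}$ is $w(\theta)$ and both Christoffel functions obey the M\'at\'e--Nevai--Totik asymptotic of Step two, the controlling mismatch tends to $0$ uniformly on $J$ and on compact $(a,b)$-sets. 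Passing to any subsequential limit from Step two and invoking Step one then forces every subsequential limit of $f_n^\mu$ to equal $e^{i\pi(a-b)}\frac{\sin\pi(a-b)}{\pi(a-b)}$; as the limit is subsequence-independent, the full sequence converges to it uniformly, which is the assertion.

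The main obstacle will be Step three: controlling the off-diagonal kernel $K_n(u,v)$, rather than merely the diagonal Christoffel function, requires the comparison inequality to be genuinely quantitative and uniform both over $\theta\in J$ and over compact $(a,b)$, and it is precisely here that the local continuity of $w$ (not merely its existence) is needed to match $\mu$ and $\mu_0$ on the $O(1/n)$ scale. An alternative that sidesteps some of this bookkeeping is the de Branges-space identification: one shows any normal-family limit $L(a,b)$ is the reproducing kernel of a de Branges space of entire functions of exponential type at most $2\pi$ with the correct phase normalization, and invokes uniqueness of such a kernel to identify $L$ with the sinc kernel of Step one. Either route reduces the theorem to the model computation, but the uniformity of the comparison is the delicate point.
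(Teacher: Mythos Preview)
The paper does not prove this statement at all: Theorem~\ref{LL} is quoted verbatim from Levin and Lubinsky \cite{LevLub07} and used as a black box in the proof of Theorem~\ref{OPUCBand}. There is therefore no ``paper's own proof'' to compare your proposal against; the paper simply invokes the result.

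That said, your sketch is a faithful outline of the actual Levin--Lubinsky argument. The three-step structure --- explicit model computation for $d\theta/(2\pi)$, normal-family compactness from Christoffel-function asymptotics and the reproducing-kernel Cauchy--Schwarz bound, and then Lubinsky's comparison inequality to pin down the subsequential limits --- is precisely the method of \cite{LevLub07}. One technical point worth flagging: comparing $\mu$ directly with the \emph{unweighted} arclength measure $\mu_0$ does not work unless $w\equiv$ const, because the comparison inequality controls $|K_n^\mu - K_n^\nu|$ only when $\mu\le\nu$ (or vice versa) and the mismatch of diagonal Christoffel functions is small. The standard fix, which Levin and Lubinsky carry out, is a localization step: one first restricts to a small arc on which $w$ is nearly constant, applies the comparison between $\mu$ and the constant-weight measure $w(\theta_0)\,d\theta/(2\pi)$ on that arc (using regularity to control the contribution of the complement), and only then appeals to the model computation. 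Your remark that ``the local continuity of $w$ \ldots\ is needed to match $\mu$ and $\mu_0$ on the $O(1/n)$ scale'' gestures at this, but the actual mechanism is the sandwich $\mu\le\nu$ or $\nu\le\mu$ locally, not a direct density-ratio estimate. With that adjustment your outline is correct.
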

Changing the variables in by $a=u/(2\pi i)$ and $\bar{b}=\bar{v}/(2\pi i)$, the conclusion of the above result can be restated as
$$\lim_{n\rightarrow \infty}\frac{K_n\left(z\left(1+\frac{u}{n}\right),z\left(1+\frac{\overline{v}}{n}\right)\right)}{K_n(z,z)}= \frac{e^{u+v}-1}{u+v}:=H(u+v).$$

\begin{proof}[Proof of Theorem \ref{OPUCBand}]
It follows from definition of the intensity function that
\begin{eqnarray*}
\frac1n \E\big[N_n\big(\Omega(S,\tau_1,\tau_2)\big)\big] & = & \frac1n\iint_{\Omega(S,\tau_1,\tau_2)}\rho_n(z)\ d A \\
& = & \frac{1}{n}\int_S \int_{1+\frac{\tau_1}{2n}}^{1+\frac{\tau_2}{2n}}\rho_n(zr)r dr|dz|\\
& = & \frac1{2n^2}\int_S\int_{\tau_1}^{\tau_2}\rho_n\left(z\left(1+\frac\tau{2n}\right)\right)\left(1+\frac\tau{2n}\right)\ d\tau| d z|.
\end{eqnarray*}
Since
\begin{equation*}
\frac{1}{2}\int_{S}|dz|=\frac{|S|}{2},
\end{equation*}
and as $n\rightarrow \infty$ we have $1+\tau/(2n)\rightarrow 1$ uniformly for $\tau$ on compact subsets of the real line,
to complete the the proof it suffices to show
\begin{equation}
\label{needed1}
\lim_{n\rightarrow \infty}\frac1{n^2}\rho_n\left(z\left(1+\frac\tau{2n}\right)\right)= \frac1{\pi}\left(\frac{H^\prime(\tau)}{H(\tau)}\right)^\prime
\end{equation}
uniformly for $ z\in S $ and $ \tau $ on compact subsets of the real line.

Under the conditions of the hypothesis, given \eqref{SUT} we can use Theorem \ref{LL} to achieve
\begin{equation}
\label{scaling-limit-1}
\lim_{n\to\infty} K_n(z_{n,u},z_{n,\overline v})K_n^{-1}(z,z) = H(u+v)
\end{equation}
uniformly for $ z \in S $ and $ u,v $ on compact subsets of $ \C $, where $ z_{n,a}:=z(1+a/n) $. Since the above convergence is uniform for $z\in S$ and $u,v$ on compact subsets of $\C$, we can differentiate to yield
\begin{equation}
\label{scaling-limit-2}
\lim_{n\rightarrow \infty}\frac{\frac{\partial^{i+j}}{\partial u^i\partial v^j} K_n(z_{n,u},z_{n,\overline v})}{K_n(z,z)}
\lim_{n\to\infty} \frac{z^{i-j}}{n^{i+j}}\frac{K_n^{(i,j)}(z_{n,u},z_{n,\overline v})}{K_n(z,z)} = H^{(i+j)}(u+v)
\end{equation}
where we retain the convergence uniformly for $ z \in S $ and $ u,v $ on compact subsets of $ \C $.

Therefore, using the representation \eqref{thm2.1} of $\rho_n$ and the two limits \eqref{scaling-limit-1} and \eqref{scaling-limit-2} gives
\begin{align*}
\frac{1}{n^2}\rho_n\left(z\left(1+\frac\tau{2n}\right)\right)&= \frac{1}{n^2 \pi} \frac{K_n(z_{n,\tau/2},z_{n,\bar{\tau}/2})K_n^{(1,1)}(z_{n,\tau/2},z_{n,\bar{\tau}/2})-|K_n^{(0,1)}(z_{n,\tau/2},z_{n,\bar{\tau}/2})|^2}{K_n(z_{n,\tau/2},z_{n,\bar{\tau}/2})^2}\\
&=\frac{1}{\pi}\frac{\frac{K_n(z_{n,\tau/2},z_{n,\bar{\tau}/2})K_n^{(1,1)}(z_{n,\tau/2},z_{n,\bar{\tau}/2})}{n^2K_n(z,z)^2}-\frac{
|K_n^{(0,1)}(z_{n,\tau/2},z_{n,\bar{\tau}/2})|^2}{n^2K_n(z,z)^2}}{\frac{K_n(z_{n,\tau/2},z_{n,\bar{\tau}/2})^2}{K_n(z,z)^2}}\\
&\rightarrow \frac{1}{\pi}\frac{H(\tau)H^{\prime \prime}(\tau)-H^{\prime}(\tau)^2}{H(\tau)^2} \quad (n\rightarrow \infty) \\
&=\frac1{\pi}\left(\frac{H^\prime(\tau)}{H(\tau)}\right)^\prime,
\end{align*}
and thus completes the proof.
\end{proof}

\textbf{Acknowledgements.}  The work presented is a portion of the author's Ph.~D.~thesis under the supervision of Igor Pritsker.
The author is grateful to Igor Pritsker for the many helpful discussions and for financial support through his grant from the National Security Agency.  Financial support was also provided by the Jeanne LeCaine Agnew Endowed Fellowship and by the Vaughn Foundation on behalf of Anthony Kable.  The author would also like to thank Maxim Yattselev for bringing the author's attention to the ratio asymptotics that have greatly simplified the proofs of the limiting value of the intensity functions, and for making the author aware of the techniques used in the proof of Theorem \ref{OPUCBand}.

\end{document}